\def\namedlabel#1#2{\begingroup
\def\@currentlabel{#2}%
\label{#1}\endgroup
}
\title{Linear perturbations of metrics with holonomy $\Spin(7)$}
\author{Diego Conti and Daniel Perolini}
\renewcommand{\Im}{\operatorname{Im}}
\newtheorem{theorem}{Theorem}[section]
\newtheorem{lemma}[theorem]{Lemma}
\newtheorem{proposition}[theorem]{Proposition}
\theoremstyle{definition}
\theoremstyle{remark}
\newtheorem{remark}[theorem]{Remark}
\newcommand{\norm}[1]{\left\Vert#1\right\Vert}
\newcommand{\R}{\mathbb{R}}
\newcommand{\Rnd}{{\mathbb{R}^n}^*}
\newcommand{\im}{\mathrm{Im}\,}         
\newcommand{\lie}[1]{\mathfrak{#1}}     
\newcommand{\g}{\lie{g}}
\newcommand{\Lie}{\mathcal{L}}          
\newcommand{\h}{\mathbb{H}}
\renewcommand{\H}{\mathbb{H}}
\newcommand{\hook}{\lrcorner\,}
\newcommand{\Spin}{\mathrm{Spin}}
\newcommand{\Sp}{\mathrm{Sp}}
\newcommand{\SO}{\mathrm{SO}}
\newcommand{\Gtwo}{\mathrm{G}_2}
\newcommand{\spin}{\mathfrak{spin}}
\newcommand{\GL}{\mathrm{GL}}
\newcommand{\gl}{\lie{gl}}
\newcommand{\Span}[1]{\operatorname{Span}\left\{#1\right\}}
\DeclareMathOperator{\Stab}{Stab}
\DeclareMathOperator{\Ad}{Ad}
\newcolumntype{C}{>{$}c<{$}}
\newcolumntype{L}{>{$}l<{$}}
\newcolumntype{R}{>{$}r<{$}}
\newcommand{\splie}{\lie{sp}}
\newcommand{\contr}{\hook}
\DeclarePairedDelimiter{\norma}{\lVert}{\rVert}
\begin{document}
\maketitle
\begin{abstract}
We apply the method of linear perturbations to the case of $\Spin(7)$-structures, showing that the only nontrivial perturbations are those determined by a rank one nilpotent matrix. 

We consider linear perturbations of the Bryant-Salamon metric on the spin bundle over $S^4$ that retain invariance under the action of $\Sp(2)$, showing that the metrics obtained in this way are isometric.
\end{abstract}

\renewcommand{\thefootnote}{\fnsymbol{footnote}}
\footnotetext{\emph{MSC class 2020}: \emph{Primary} 53C29; \emph{Secondary} 53C25, 57S15}
\footnotetext{\emph{Keywords}: $\Spin(7)$ holonomy, linear perturbations, Ricci-flat metrics, cohomogeneity one metrics.}
\renewcommand{\thefootnote}{\arabic{footnote}}

Riemannian metrics with holonomy $\Spin(7)$ have been studied in differential geometry since the celebrated theorem of Berger \cite{Berger}, listing the possible holonomy groups of an irreducible, nonsymmetric simply connected Riemannian manifold. Metrics with holonomy contained in $\Spin(7)$ are known to be Ricci-flat \cite{Bonan}, and they imply the presence of a parallel spinor \cite{Wang:Parallel}. They are also relevant for string theory (see \cite{GukovSparks}).

The first local examples of metrics with holonomy $\Spin(7)$ were constructed in \cite{Bryant:MetricsWithExceptional}, and the first complete metric was obtained in $\cite{BryantSalamon}$; the latter takes the form of an explicit $\Sp(2)$-invariant metric on the spinor bundle over $S^4$. It was later shown in \cite{Cvetic:NewComplete} that this metric belongs to a one-parameter family of invariant metrics.

We note that the metrics of \cite{BryantSalamon} are of cohomogeneity one;  other cohomogeneity one metrics with holonomy contained in $\Spin(7)$ have been constructed in \cite{CveticGibbonsPope,KannoYasui,GukovSparks,Reidegeld,Cvetic:CohomogeneityOne,Chi,Bazaikin:OnNewExamples,Bazaikin:Noncompact}. Outside of the cohomogeneity one setting other constructions exist, but the metrics they determine are not explicit (see \cite{Joyce:Compact,Joyce:Anew,Foscolo,Kovalev}).

As observed in \cite{Bryant:MetricsWithExceptional}, a metric with holonomy contained in $\Spin(7)$ is defined by a closed form $\Omega$ which is pointwise linearly equivalent to a reference $4$-form on $\R^8$ with stabilizer $\Spin(7)$. It is then possible to define perturbations of a $\Spin(7)$-metric by replacing $\Omega$ with a perturbed form $\Omega+\delta$ which remains pointwise linearly equivalent to $\Omega$. Notice that for the parallel $3$-forms $\varphi$ arising in the context of holonomy $\Gtwo$ the form $\varphi+\delta$ is always linearly equivalent to $\varphi$ for $\delta$ sufficiently small; in other terms, $\varphi$ is stable in the sense of \cite{Hitchin:StableForms}. The  $\Spin(7)$ form $\Omega$ is not stable, however, so more work is needed in order to obtain a perturbation.

One possible approach was considered in \cite[Section 5.2]{Karigiannis} by taking 
\begin{equation} \label{eq:antisimmetric}
	\delta=v^\flat\wedge (w\hook\Omega)-w^\flat\wedge (v\hook\Omega),
\end{equation}
for $v,w$ vector fields on $M$. In terms of the infinitesimal action $\rho$ of $\gl(T_xM)$ on $\Lambda^4 T^*_xM$, this amounts to setting $\delta=\rho(A)\Omega$, where $A$ is the skew-symmetric endomorphism $A=v^\flat\otimes w - w^\flat\otimes v$.  We recall that under $\Spin(7)$ the bundle of four-forms splits as
\begin{equation}
 \label{eqn:split4}
\Lambda^4_1\oplus\Lambda^4_7\oplus\Lambda^4_{27} \oplus \Lambda^4_{35};
\end{equation}
the skew-symmetric $A$ determines a  perturbation term $\delta$ in $\Lambda^4_7$. Whilst this construction gives nontrivial perturbations of the original metric in the case of $\Gtwo$ (mutatis mutandis: the relevant decomposition is $\Lambda^3_1\oplus\Lambda^3_7\oplus\Lambda^3_{27}$
 and the perturbation $\delta$ an element of $\Lambda^3_7$), it turns out that in the $\Spin(7)$ case the perturbed form never defines a $\Spin(7)$-structure (\cite{Karigiannis}).

A different ansatz was considered in \cite{ContiMadsenSalamon} in the context of $\Sp(2)\Sp(1)$-structures, which amounts to imposing that $A$ be nilpotent, rather than skew-symmetric. The key observation, working at a point, is that when 
\begin{equation}
 \label{eq:rhoquadro}
\rho(A)(\rho(A)\Omega)=0,
\end{equation}
the form
\[\Omega + t\delta, \quad \delta= \rho(A)\Omega\]
is always in the same $\GL(8,\R)$-orbit as $\Omega$ for any $t$; one then says that $\delta$ is a linear perturbation of $\Omega$. It turns out (see \cite{ContiMadsenSalamon}) that one can assume $A$ to be nilpotent without loss of generality. 

In this paper we study nilpotent perturbations of the $\Spin(7)$-form $\Omega$. By a case-by-case analysis of the possible Jordan forms of a nilpotent matrix in  $\gl(8,\R)$, and making use of $\Spin(7)$-invariance of \eqref{eq:rhoquadro}, we prove that any linear perturbation of the $\Spin(7)$ form $\Omega$ is defined by a rank one nilpotent matrix, i.e. it has the form \[\delta=v^\flat\wedge (w\hook \Omega),\] with $v,w$ orthogonal  vector fields. In terms of \eqref{eqn:split4}, the resulting perturbations of the $\Spin(7)$ form turn out to be  elements of $\Lambda^4_7\oplus\Lambda^4_{35}$.

We apply the method of linear perturbations to the Bryant-Salamon metric; we construct a family of linear perturbations parameterized by three functions of one variable. However, it turns out that the resulting metrics are isometric; due to the fact that nilpotent perturbations preserve volumes, we do not recover the squashed deformations of \cite{Cvetic:NewComplete}.

Our result complements the result of 
\cite{Lehmann:deformations}, stating that the Bryant-Salamon is rigid  in the class of asymptotically conical $\Spin(7)$ metrics.

\textbf{Acknowledgements.}
This work is partly based on the second author's master thesis \cite{Perolini:thesis}. 
We thank Thomas Madsen for useful discussions.
\section{Linear perturbations}
In this section we classify linear perturbation at a point of $4$-forms defining a $\Spin(7)$-structure, proving that they are in one-to-one correspondence with nilpotent matrices of rank one in $\gl(8,\R)$.

We first recall some results from \cite{ContiMadsenSalamon}.
For a lighter notation, we shall write ${\R^n}^*$ instead of $(\R^n)^*$. Denote by 
\[\gl({\R^n}^*)\times \Lambda^k{\R^n}^*\to\Lambda^k{\R^n}^*, \quad (A,\omega)\mapsto \rho(A)\omega\]
the natural action of $\gl({\R^n}^*)$ on $\Lambda^k{\R^n}^*$. We shall write $\rho(A)^2\omega$ for $\rho(A)(\rho(A)\omega)$. 
\begin{proposition}[\cite{ContiMadsenSalamon}]
Fix $\omega\in\Lambda^k{\R^n}^*$  and a solution $A\in\gl({\R^n}^*)$ of
\begin{equation} \label{eq:deformazioni}
	\rho(A)^2\omega=0.
\end{equation} Then 
\[\beta_t=\omega+ t\rho(A)\omega\]
lies in the same $\GL(n,\R)$-orbit as $\omega$ for all $t\in\R$.
\end{proposition}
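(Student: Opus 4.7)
The plan is to realise $\beta_t$ as the image of $\omega$ under the action of a one-parameter subgroup of $\GL(n,\R)$. By definition, $\rho$ is the differential of the natural representation of $\GL(n,\R)$ on $\Lambda^k{\R^n}^*$, so the Lie group/Lie algebra correspondence yields, for every $A\in\gl({\R^n}^*)$ and $t\in\R$,
\[
\exp(tA)\cdot\omega=\exp\bigl(t\rho(A)\bigr)\,\omega=\sum_{j=0}^{\infty}\frac{t^j}{j!}\,\rho(A)^j\omega,
\]
the series on the right being a finite sum in finite dimensions and convergent everywhere.

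The first key step is to observe that the hypothesis $\rho(A)^2\omega=0$ forces $\rho(A)^j\omega=0$ for every $j\geq2$: indeed, once the claim is true for some $j\geq 2$, we have $\rho(A)^{j+1}\omega=\rho(A)\bigl(\rho(A)^j\omega\bigr)=0$, so a trivial induction starting from $j=2$ gives the statement.

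Substituting this into the exponential series collapses it to its first two terms, giving
\[
\exp(tA)\cdot\omega=\omega+t\,\rho(A)\omega=\beta_t.
\]
Since $\exp(tA)\in\GL(n,\R)$, this exhibits $\beta_t$ as an element of the $\GL(n,\R)$-orbit of $\omega$ for every real $t$, which is exactly the desired conclusion.

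The proof is essentially a direct consequence of the interplay between the derived representation $\rho$ and the group action; the only point that requires attention is the identification of $\rho$ as the infinitesimal action corresponding to the given $\GL(n,\R)$ action on $\Lambda^k{\R^n}^*$, which is built into the setup. The truncation of the exponential series, which is the real content of the argument, is immediate once the nilpotency property $\rho(A)^j\omega=0$ for $j\geq 2$ is in place. I do not anticipate any serious obstacle; the statement is essentially a one-line observation about the Lie exponential acting on a vector killed by the square of a generator.
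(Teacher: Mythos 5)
Your proof is correct and is precisely the standard argument (the one used in the cited reference \cite{ContiMadsenSalamon}): the paper itself states this proposition without proof, and your realisation of $\beta_t$ as $\exp(tA)\cdot\omega=\exp(t\rho(A))\omega$, with the series truncating after the linear term because $\rho(A)^2\omega=0$ propagates to $\rho(A)^j\omega=0$ for all $j\ge 2$, is exactly what is intended. The only small slip is the aside that the exponential series is ``a finite sum in finite dimensions'' --- that is false for a general $A$ --- but it is harmless here since you only need convergence, and the series does truncate when applied to $\omega$.
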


It turns out that there is no loss of generality in assuming that $A$ is nilpotent. Indeed, we can apply the Jordan decomposition and write $A=S+N$, where $S$ is semisimple and $N$ is nilpotent. We have the following:
\begin{proposition}[\cite{ContiMadsenSalamon}] \label{prop:nilp}
Let $\omega\in\Lambda^k{\R^n}^*$ and $A\in\gl({\R^n}^*)$ a solution of \eqref{eq:deformazioni} with Jordan decomposition $A=S+N$. Then 
\[\rho(N)\omega=\rho(A)\omega,\; \rho(N)^2\omega=0.\]
\end{proposition}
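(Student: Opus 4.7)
The plan is to reduce the proposition to an eigenspace argument for the semisimple part of $\rho(A)$. The crucial preliminary fact is that $\rho$ respects Jordan decompositions: if $A=S+N$ is the Jordan decomposition in $\gl({\R^n}^*)$, then $\rho(A)=\rho(S)+\rho(N)$ is the Jordan decomposition of $\rho(A)$ acting on $\Lambda^k{\R^n}^*$. I would verify this directly: $[\rho(S),\rho(N)]=\rho([S,N])=0$; after complexifying, a basis of $S$-eigenvectors of ${\R^n}^*\otimes\C$ wedges to a basis of $\rho(S)$-eigenvectors of $\Lambda^k({\R^n}^*\otimes\C)$, so $\rho(S)$ is semisimple; and $\exp(t\rho(N))=\rho(\exp(tN))$ is polynomial in $t$ because $N$ is nilpotent, which forces $\rho(N)$ to be nilpotent as well.

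Then I would decompose $\Lambda^k({\R^n}^*\otimes\C)=\bigoplus_\lambda V_\lambda$ into $\rho(S)$-eigenspaces. Since $\rho(S)$ and $\rho(N)$ commute, each $V_\lambda$ is preserved by $\rho(N)$, hence by $\rho(A)$, so the assumption $\rho(A)^2\omega=0$ splits as $\rho(A)^2\omega_\lambda=0$ for every component of $\omega=\sum_\lambda\omega_\lambda$. On $V_\lambda$ one has $\rho(A)|_{V_\lambda}=\lambda\,\id+\rho(N)|_{V_\lambda}$ with $\rho(N)|_{V_\lambda}$ nilpotent; when $\lambda\neq 0$ this is invertible, so $\omega_\lambda=0$. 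Hence $\omega\in V_0=\ker\rho(S)$, which gives $\rho(S)\omega=0$ and therefore the first claim $\rho(N)\omega=\rho(A)\omega$. Substituting back and using $[\rho(A),\rho(N)]=0$, we obtain $\rho(N)^2\omega=\rho(N)\rho(A)\omega=\rho(A)\rho(N)\omega=\rho(A)^2\omega=0$, proving the second claim.

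The only delicate point is the functoriality of the Jordan decomposition under $\rho$; this is standard Lie-theoretic material but, as outlined above, admits a short direct proof in the present setting. Once that is in place, the rest is a clean application of the eigenspace decomposition, with the invertibility of $\lambda\,\id+\rho(N)|_{V_\lambda}$ for $\lambda\neq 0$ doing all the real work.
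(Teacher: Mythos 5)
The paper does not actually prove this proposition; it is quoted from \cite{ContiMadsenSalamon}, so there is no internal proof to compare against. Your argument is correct and is the standard one for this statement: $\rho(S)$ and $\rho(N)$ are the commuting semisimple and nilpotent parts of $\rho(A)$ on $\Lambda^k{\R^n}^*$, the hypothesis $\rho(A)^2\omega=0$ forces the components of $\omega$ in the nonzero eigenspaces of $\rho(S)$ to vanish because $\rho(A)$ restricts there to an invertible operator, and then $\rho(S)\omega=0$ yields $\rho(N)\omega=\rho(A)\omega$ and $\rho(N)^2\omega=\rho(A)\rho(N)\omega=\rho(A)^2\omega=0$.
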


\begin{remark} \label{claim:1}
	Let $v \in \R^n$, $\alpha \in V^*$ and $\omega \in \Lambda^p {\R^n}^*$. Then 
	$\rho(v \otimes \alpha )\omega=\alpha \wedge (v \hook \omega)$.

	Indeed it suffices to prove the claim for $p=2$: let 
	$\varepsilon_1,\varepsilon_2 \in {\R^n}^*$, then
	\begin{gather*}
	\rho(v \otimes \alpha) \varepsilon_1 \wedge \varepsilon_2 =(v \otimes \alpha) (\varepsilon_1 )\wedge \varepsilon_2+\varepsilon_1 \wedge (v \otimes \alpha) (\varepsilon_2 ) 
	\\= \varepsilon_1(v) \alpha  \wedge \varepsilon_2+\varepsilon_1 \wedge 				\varepsilon_2(v) 	\alpha
	=\alpha \wedge (v \contr \varepsilon_1) \wedge \varepsilon_2
	- \alpha \wedge \varepsilon_1 \wedge (v \lrcorner \, \varepsilon_2)
	\\=\alpha \wedge \bigl(v \lrcorner \, (\varepsilon_1 \wedge \varepsilon_2) \bigr)
	\end{gather*}
	where last equality follows from Leibnitz's rule.
\end{remark}

\begin{remark}\label{claim:2}
	Let $A$ be a nilpotent, rank-one endomorphism
	of $\Rnd$, then
	\begin{equation*}
	\rho(A)^2=0.
	\end{equation*}
	In particular $A$ is a solution of \eqref{eq:deformazioni}
	for all $ \omega$.

	Indeed if $A$ has rank $1$ there exists a basis $v^1, \dots, v^n$
	of $\Rnd$ such that \[Av^1=v^2,\quad  Av^2=\dots =Av^n=0.\] We can write $A$ in tensorial form as $A=v_1 \otimes v^2$, where
	$v_1,\dots,v_n$ is the corresponding dual basis in $\R^n$.
	Let $\omega \in \Lambda^p \Rnd$; we have
	\begin{multline*}
	\rho(v_1 \otimes v^2)^2 \omega=v^2 \wedge \bigg (v_1 \hook \bigl( v^2 
	\wedge (v_1 \hook \omega) \bigr) \bigg)
	\\=v^2 \wedge \bigl ( (v_1 \hook v^2 ) \wedge
	(v_1 \hook \omega)-v^2 \wedge (v_1 \hook v_1 \hook 
	\omega) \bigr)=0
	\end{multline*}
	where the first identity follows from Remark~\ref{claim:1} and the
	second one holds by the Leibnitz rule for $\hook$.
\end{remark}

Recall that if $e_1,\dots, e_8$ is the standard basis of
$\R^8$ and $\alpha,\beta, \Omega$ are the linear forms
defined by
\begin{align*}
	\alpha &=e^{12}+e^{34}+e^{56}+e^{78},\\
	\beta &=(e^1+ie^2) \wedge (e^3+ie^4) \wedge (e^5+ie^6) \wedge (e^7+ie^8),\\
	\Omega &=\frac{\alpha^2}{2}+Re(\beta),
\end{align*}
then the stabilizer in $\GL(8,\R)$ of the $4$-form $\Omega$ 
is a subgroup of $\SO(8)$ isomorphic to $\Spin(7)$ (see \cite{Bonan,Bryant:MetricsWithExceptional}). Moreover, $\Spin(7)$ acts  transitively on the sphere $S^7 \subset \R^8$, and the stabilizer of $e_8$ is isomorphic to $G_2$, which acts transitively on the sphere $S^6 \subset \R^7\cong \R^7 \times \{0\}$. From now on we shall make the identifications $\Spin(7)=\Stab(\Omega)$,  $G_2=\Stab(\Omega)\cap\Stab (e_8)$. Giving 
a $\Spin(7)$-structure on a $8$-manifold amounts to giving a $4$-form linearly equivalent to $\Omega$ at each point.

Thus, we are interested in linear perturbations of $\Omega$; in particular, we set $n=8$ and $k=4$.
Up to change of basis, nilpotent matrices are classified over the reals
by partitions with weight $8$, giving $22$ possibilities that can be encoded in terms of
Young diagrams. For example, the diagram
\[
\Gamma=\yng(3,2,1,1,1) \:.
\]
describes an endomorphism of ${\R^8}^*$
with Jordan blocks of size $(3, 2, 1, 1, 1)$, which,
with respect to some basis $\{w^1,v^2,v^3,w^4,v^5,v^6,v^7,v^8\}$, satisfies
\begin{gather*}
	w^1\mapsto v^2 \mapsto v^3 \mapsto 0\\
	w^4 \mapsto v^5 \mapsto 0\\
	v^6,v^7,v^8 \mapsto 0.
\end{gather*}
In the rest of this paper we will use the notation
illustrated in the last example:
for each Jordan block $J_i$ of dimension $r \ge 2$ we fix an element $w^i$
such that $w^i, Aw^i,\dots, A^{r-1}w^i$ are linearly indipendent, and denote the other basis elements by $v^j$. The dual basis of $\R^n$ will be denoted by $\{w_i,v_j\}$.

In the following, we will need to consider the Young diagrams
\begin{equation*}
\Gamma_1=\yng(3,2,2,1) \ \  ,  \ \
\Gamma_2=\yng(2,2,2,2) \ \  ,    \ \
\Gamma_3=\yng(2,2,2,1,1) \ \  ,  \ \
\Gamma_4=\yng(2,2,1,1,1,1) \ \  ,  \ \
\Gamma_5=\yng(2,1,1,1,1,1,1) \ \  ,  \ \
\Gamma_6=\yng(1,1,1,1,1,1,1,1) \  .
\end{equation*}
describing six particular configurations
of Jordan blocks. Notice that $\Gamma_5$ corresponds to rank-one nilpotent endomorphisms and $\Gamma_6$ to zero.

Given a four-form $\omega$ on $\R^8$ and two vectors $u,v\in\R^8$, we will say that the contraction $u\hook v\hook \omega$ is degenerate if so is the bilinear form induced on the quotient $\R^8/\Span{u,v}$, i.e. 
\[(u\hook v \hook\omega)^3\neq0.\]
\begin{lemma} \label{lemma:degeneracy}
Fix $\omega \in \Lambda^4{\R^8}^*$ and let $A\in\gl({\R^n}^*)$ be a nilpotent solution of \eqref{eq:deformazioni}. If
	$A$ has diagram $\Gamma_1, \Gamma_2,\Gamma_3, \Gamma_4$ and $\{w^i,v^k\}$
	is a Jordan basis of $A$, then
	\begin{equation}
		w_i \hook w_j \hook \omega \text{ is degenerate for all } i,j.
	\end{equation}
\end{lemma}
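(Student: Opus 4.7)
My plan is to handle the four diagrams in parallel, using the Jordan basis to rewrite $\rho(A)^2\omega=0$ as an explicit multilinear relation among the contractions $\sigma_{ij}=w_i\hook w_j\hook\omega$ and their higher analogues $\tau_{ij\cdots}=w_i\hook w_j\hook\cdots\hook\omega$. Writing $A=\sum_k \alpha_k\otimes u_k$ as a sum of rank-one nilpotent pieces, Remark~\ref{claim:1} gives $\rho(A)\omega=\sum_k \alpha_k\wedge(u_k\hook\omega)$, and since $A^T w_i=0$ for every Jordan generator $w^i$, the contraction $\iota_{w_i}$ commutes with $\rho(A)$; therefore contracting $\rho(A)^2\omega=0$ with a string of generators $w_{k_1},\ldots,w_{k_m}$ yields auxiliary relations of the same type on lower-degree contractions of $\omega$.

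For $\Gamma_2,\Gamma_3,\Gamma_4$ one has $A^2=0$, so the formula simplifies to $\rho(A)^2\omega=-2\sum_{i<j} v^i\wedge v^j\wedge\sigma_{ij}$ with $v^i=Aw^i$, the sum indexed by pairs of generators. In $\Gamma_4$ there is a single term $v^1\wedge v^2\wedge\sigma_{12}=0$; this forces $\sigma_{12}$ to lie in the ideal generated by $v^1,v^2$, so it has rank at most four and $\sigma_{12}^3=0$. In $\Gamma_3$, contracting the 4-form identity in turn with $w_1,w_2,w_3$ yields three conditions $v^{jk}\wedge\tau_{123}=0$ on the 1-form $\tau_{123}=w_1\hook w_2\hook w_3\hook\omega$, where for each $i\in\{1,2,3\}$ one takes $\{j,k\}=\{1,2,3\}\setminus\{i\}$; intersecting the three spans $\Span{v^j,v^k}$ forces $\tau_{123}=0$, so each $\sigma_{ij}$ has no $w^k$-component and lives in $\Lambda^2\Span{v^1,\ldots,v^5}$, inside which $\Lambda^6=0$. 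In $\Gamma_2$, the same program with four generators gives after one contraction a 3-form relation among the $\tau_{ijk}$; a second contraction forces $\tau_{1234}=0$, and a short linear analysis of the remaining system then yields $\tau_{ijk}=0$ for every triple, so $\sigma_{ij}\in\Lambda^2\Span{v^1,\ldots,v^4}$ and $\sigma_{ij}^3$, being a 6-form on a 4-dimensional space, vanishes.

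The case $\Gamma_1=(3,2,2,1)$ is the most delicate because the size-three block makes $A^2\ne 0$: the expansion of $\rho(A)^2\omega$ acquires additional terms involving $A^2 w^1$, and contraction by the middle basis vector of the size-three block no longer commutes with $\rho(A)$ (the commutator being $-\iota_{w_1}$, by the identity $[\rho(A),\iota_v]=-\iota_{A^T v}$). The contraction strategy above still applies, but these extra terms must be carried along; the conclusion is again that each $\sigma_{ij}$, viewed as a 2-form on the six-dimensional quotient $\R^8/\Span{w_i,w_j}$, has rank at most four, hence cubes to zero. The main obstacle throughout is the bookkeeping required to extract the higher-order vanishings $\tau_{123}=0$, $\tau_{ijk}=0$, $\tau_{1234}=0$ from the multilinear systems produced by iterated contraction, together with the separate treatment needed to accommodate the size-three block of $\Gamma_1$.
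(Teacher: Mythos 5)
Your overall strategy coincides with the paper's: expand $\rho(A)^2\omega=0$ in a Jordan basis, use the fact that $\iota_{w_i}$ commutes with $\rho(A)$ to generate auxiliary relations by iterated contraction, and conclude that each $\sigma_{ij}=w_i\hook w_j\hook\omega$ lies in $\Lambda^2$ of a space too small for its cube to survive. Your treatments of $\Gamma_4$ and $\Gamma_3$ are complete and correct, and for $\Gamma_3$ arguably cleaner than the paper's (which folds it into ``similar and easier''): the three conditions $v^{jk}\wedge\tau_{123}=0$ do intersect to force $\tau_{123}=0$, and then $\sigma_{ij}\in\Lambda^2\Span{v^1,\dots,v^5}$ kills the cube. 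For $\Gamma_2$ your outline is also sound: after $\tau_{1234}=0$ one finds $t^k_T=0$ for $k\in T$ and a residual $4\times 4$ system in the four components $u_l=t^l_{T_l}$ ($T_l$ the triple omitting $l$), namely $u_1-u_2+u_3=0$, $-u_1+u_2+u_4=0$, $u_1+u_3-u_4=0$, $u_2-u_3+u_4=0$, which is indeed nonsingular. But note that this nonsingularity is exactly the crux (it is the analogue of the paper's explicit $4\times4$ matrix with determinant $-1$), and you assert it as ``a short linear analysis'' without exhibiting it; the signs matter, and a priori such a system could well be degenerate.

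The genuine gap is $\Gamma_1=(3,2,2,1)$. There you only state that ``the contraction strategy still applies'' and that ``the conclusion is again'' rank at most four; no expansion of $\rho(A)^2\omega$ is given, the extra terms produced by the size-three block (of the shape $v^4\wedge\bigl(w_1\hook\omega+2\sum_i v^i\wedge(w_i\hook v_1\hook\omega)\bigr)$ in the paper's equation \eqref{eq:casogamma1}) are not written down, and no mechanism is offered for why they cannot conspire to leave a nonzero $w$-component in some $\sigma_{ij}$. This is precisely the case where the paper has to work hardest: the decisive step is that a specific $3\times3$ linear system coupling the coefficient $\lambda$ of $\gamma_{123}$ to components of the $\beta_{ij}$ is nonsingular, and that fact is a computation, not a structural consequence of the method. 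Since the lemma is used to rule out $\Gamma_1$ as a Jordan type in Theorem~\ref{teo:linpert}, the case cannot be waved through; as written, your argument establishes the statement for $\Gamma_2,\Gamma_3,\Gamma_4$ but not for $\Gamma_1$.
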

\begin{proof} 
	\emph{Case $\Gamma_2$}:
	writing $A$ in tensorial form we have
		\begin{equation*}
			A=\sum_{i=1}^{4} w_i \otimes v^i.
		\end{equation*}
		The following hold:
		\begin{gather*}
			\rho(A)^2 \omega=\rho \bigg(\sum_{i=1}^{4} w_i \otimes v^i \bigg)^2 \omega=
			\sum_{1\leq i,j \leq 4}\rho(w_i \otimes v^i)\rho(w_j \otimes v^j)\omega
			\\=2 \sum_{1\leq i<j \leq 4} v^i \wedge v^j \wedge (w_i \hook w_j \hook \omega).
		\end{gather*}
		The second equality follows
		from the identities
		\begin{align*}
			\rho(w_i \otimes v^i)\rho(w_j \otimes v^j)&=\rho(w_j \otimes v^j)\rho(w_i \otimes v^i),\\
			\rho(w_i \otimes v^i)^2&=0,
		\end{align*}
		(easy consequences of Remark~\ref{claim:1} and Remark~\ref{claim:2}), and the last equality
		holds because of Remark~\ref{claim:1}.	Thus, we can  write		\eqref{eq:deformazioni} in the form
		\begin{equation}\label{eq:deformation}
			\sum_{1\leq i<j \leq 4} v^{ij} \wedge (w_i \hook w_j \hook
			\omega)=0.
		\end{equation}
		Contracting by $w_k$, multiplying with $v^l$ and using
		Remark~\ref{claim:1} and Remark~\ref{claim:2}
		we obtain the following identities:
		\begin{equation} \label{eq:ijkl}
			 v^{lij} \wedge (w_k \hook w_i \hook w_j \hook
			\omega)=0 \quad \quad \forall i,j,k,l \quad:\quad \{i,j,l,k \}=\{1,2,3,4\}.
		\end{equation}
		We can decompose $\omega$ as
		\begin{gather}	\label{eq:omegariscritta}
				\omega=\sum_{i=1}^{4} w^i \wedge \alpha_i + \sum_{1 \leq i < j\leq 4}
				w^{ij} \wedge \beta_{ij}+\sum_{1 \leq i < j<k \leq 4} 	w^{ijk} \wedge \gamma_{ijk}
			+\delta w^{1234} +\varepsilon
		\end{gather}
			\begin{gather*}
		\alpha_i,\beta_{ij},\gamma_{ijk},\varepsilon \in \Lambda \;\Span {v^1, \dots, v^4} \quad,\quad \delta \in \R	.
		\end{gather*} 
		We have that \eqref{eq:ijkl} implies
		\begin{equation}\label{eq:deltagamma}
			\delta=0  ,\quad \gamma_{ijk}=c^lv^l \quad\quad \forall\; \{i,j,l,k \}=\{1,2,3,4\},\quad c^l \in \R. 
		\end{equation} 
		Notice that in order to prove the degeneracy
		of $w_i \hook w_j \hook \omega$ 
		it is sufficient to prove $c^l=0$ for $ l=1,2,3,4$.
		Substituting \eqref{eq:omegariscritta} and \eqref{eq:deltagamma} in \eqref{eq:deformation}
		and writing \eqref{eq:deformation}
		in the form
		$w^1 \wedge I_1+w^2 \wedge I_2+w^3 \wedge I_3+
		w^4 \wedge I_4=0$ it turns out that
		$I_1=0=I_2=I_3=I_4$; this implies
		the linear system		\begin{equation*}
		\begin{pmatrix}
		1 & -1 & 0 & -1 \\ 
		0 & -1 & 1 & -1 \\ 
		1 & 0 & -1 & 1 \\ 
		-1 & 1 & -1 & 0
		\end{pmatrix} 
		\begin{pmatrix}
		c^1 \\ 
		c^2 \\ 
		c^3 \\ 
		c^4
		\end{pmatrix}=0;
		\end{equation*} 
		by  nonsingularity of the matrix, we have $c^l=0$,
		$l=1,\dots,4$.
		
		\emph{Case $\Gamma_1$}:
		this time we have
		\begin{equation*}
			A=v_4 \otimes v^1+\sum_{i=1}^{3} w_i \otimes v^i.
		\end{equation*}
		Arguing as in case $\Gamma_2$, Equation~\eqref{eq:deformazioni} can be written as
		\begin{equation}	\label{eq:casogamma1}
			2\sum_{1 \leq i < j \leq 3}v^{ij} \wedge (w_i \hook w_j \hook \omega)
			+v^4 \wedge \bigg( w_1 \hook \omega +
			2 \sum_{i=1}^{3} v^i \wedge (w_i \hook v_1 \hook \omega) \bigg)=0.
		\end{equation}
		Multiplying by $v^4$ and contracting by $w_k$ with
		$k=1,2,3$ gives
		\begin{equation} \label{eq:defcontr}
			v^{ij4} \wedge (w_3 \hook w_2 \hook w_1 \hook \omega)=0 \quad  \forall \; 1 \le i <j \le 3.
		\end{equation}
		Similarly as in the case of $\Gamma_2$, we write
		\begin{gather}	\label{eq:omegariscritta2}
		\omega=\sum_{i=1}^{3} w^i \wedge \alpha_i + \sum_{1 \leq i < j\leq 3}
		w^{ij} \wedge \beta_{ij}+ w^{123} \wedge \gamma_{123}
		+\delta w^{123} + \varepsilon,
		\end{gather}
		and \eqref{eq:defcontr} gives 
		\begin{equation} \label{eq:lambda}
				\gamma_{123}=\lambda v^4 \quad, \quad \lambda \in \R.		
		\end{equation}
		It is sufficient to prove $\lambda=0$:
		substituting \eqref{eq:omegariscritta2},
		\eqref{eq:lambda} in 
		\eqref{eq:defcontr} and writing 
		\[
			\beta_{12}=\sum_{i<j} y_{ij}v^{ij};\quad
			 \beta_{13} =\sum_{i<j}x_{ij}v^{ij};\quad
\beta_{23}=\sum_{i<j} z_{ij}v^{ij},\]
we obtain an equation of the form 
		\[
				I_1v^{124} \wedge w^3+I_2v^{134} \wedge w^2+
		I_3v^{234} \wedge w^1+\dots=0 \quad \quad I_1,I_2,I_3\in \R,
		\]
		resulting in $I_1=0=I_2=I_3$;
		explicitly, we have the linear system
		\begin{equation*}
		\begin{pmatrix}
		1 & 3 & 0 \\ 
		-1 & 0 & 3 \\ 
		1 & 2 & -2
		\end{pmatrix}
		\begin{pmatrix}
		\lambda \\ 
		x_{12} \\ 
		y_{13}
		\end{pmatrix} =0,
		\end{equation*}
		with nonsingular matrix, so $\lambda=x_{12}=y_{13}=0$.
		
		Cases $\Gamma_3, \Gamma_4$ are similar (and easier).
	\end{proof}

We will need the following:
\begin{proposition}
\label{prop:nondeg}
	Let $u,v \in \R^8$ be linearly indipendent. Then $u \hook v \hook \Omega$ is nondegenerate.
\end{proposition}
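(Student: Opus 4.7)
The plan is to use the $\Spin(7)$-invariance of $\Omega$ together with the double transitivity of $\Spin(7)$ on orthonormal $2$-frames in $\R^8$, reducing the statement to a single explicit computation at the standard pair $(e_7,e_8)$. The first reduction is the observation that whether $u\hook v\hook\Omega$ is nondegenerate depends only on the $2$-plane $\Span{u,v}$: replacing $(u,v)$ by any other basis of the plane multiplies the $2$-form by the determinant of the change of basis, and hence multiplies its cube by a nonzero scalar. So we may assume $u,v$ are orthonormal for the standard inner product, which $\Spin(7)\subset\SO(8)$ preserves.

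Next, since $\Spin(7)$ acts transitively on $S^7$ and its stabilizer at $e_8$ (which is $G_2$) acts transitively on $S^6\subset e_8^\perp$, there exists $g\in\Spin(7)$ sending $u$ to $e_8$ and $v$ to $e_7$. The invariance $g^*\Omega=\Omega$ gives $u\hook v\hook\Omega=g^*(e_7\hook e_8\hook\Omega)$, so nondegeneracy of the left-hand side is equivalent to nondegeneracy of $e_7\hook e_8\hook\Omega$ on the six-dimensional quotient $\R^8/\Span{e_7,e_8}$.

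Finally, the explicit computation: every monomial of $\operatorname{Re}(\beta)$ has the shape $e^{i_1i_2i_3i_4}$ with $i_1\in\{1,2\}$, $i_2\in\{3,4\}$, $i_3\in\{5,6\}$, $i_4\in\{7,8\}$, so none contains both $e^7$ and $e^8$, and hence $\operatorname{Re}(\beta)$ contributes nothing. Inside $\alpha^2/2$ only the monomials $e^{1278}$, $e^{3478}$, $e^{5678}$ survive the double contraction, giving $e_7\hook e_8\hook\Omega=-(e^{12}+e^{34}+e^{56})$, whose cube $-6\,e^{123456}$ is manifestly nonzero. The argument is driven entirely by the symmetry reduction and presents no serious obstacle; the only real pitfall would be neglecting the reduction to orthonormal frames and trying to move a general linearly independent pair directly to $(e_7,e_8)$ via a single group element.
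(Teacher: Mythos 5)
Your proof is correct and follows essentially the same route as the paper: reduce to an orthonormal pair (your determinant observation replaces the paper's normalization and projection step), move the pair to $(e_7,e_8)$ using transitivity of $\Spin(7)$ on $S^7$ and of $G_2$ on $S^6$, and finish with an explicit double contraction. Your explicit value $e_7\hook e_8\hook\Omega=\pm(e^{12}+e^{34}+e^{56})$ is the one consistent with the paper's stated formula for $\Omega$ (the paper's displayed $e^{35}+e^{48}+e^{67}$ evidently reflects a different basis convention), and in either case the cube is nonzero, so the argument goes through.
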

\begin{proof}
It is sufficient to prove the thesis with
	$u,v$ orthogonal and normalized, because the following hold:
	\begin{align*}
		(u \hook v \hook \Omega)^3&=\norma{u}^3\norma{v}^3
		\bigg(\frac{u}{\norma{u}}  \hook  
		\frac{v}{\norma{v}} \hook \Omega\bigg)^3,\\
 		u \hook v \hook \Omega&=u \hook \big(v-P_{u}v \big) \hook \Omega,
	\end{align*}
	where $P_u$ is the orthogonal projection onto the subspace
	generated by $u$.
	So let $u, v$ be orthogonal vectors in $S^7$; since $\Spin(7)$ acts transitively on $S^7$, there exists
	$R_1 \in\Spin(7)$ such that $R_1v=e_8$;
	in particular $R_1$ is an isometry, so
	$R_1u \perp R_1v=e_8$ and $R_1u \in \R^7$.
	It follows that $R_1u \in S^6$, but $G_2$
	is transitive on $S^6$ so
	there exists $R_2 \in G_2$ such that
	$R_2R_1u=e_7$.
	Setting $R=R_1^{-1}R_2^{-1}$ we have
	$u=Re_7$ and $v=Re_8$.
	 For all $x,y \in \R^8$ we have 
\begin{multline}	
	(u\hook v \hook \Omega)(x,y)
	=\Omega(Re_7,Re_8,x,y)
	=\Omega(e_7,e_8,R^{-1}x,R^{-1}y) \\
	 =(e_7\hook e_8 \hook \Omega)(R^{-1}x,R^{-1}y)
	=(R^{-1})^*(e_7\hook e_8 \hook \Omega)(x,y);
	\label{eq:contrazioni}
\end{multline}
	the second equality holds
	by the $\Spin(7)$-invariance of $\Omega$.
	So from \eqref{eq:contrazioni} 
	we have
	\begin{equation*}
		(u\hook v \hook \Omega)^3=(R^{-1})^*(e_7\hook e_8 \hook \Omega)^3,\end{equation*}
	but
	\begin{gather*}
	(e_7\lrcorner e_8 \lrcorner \Omega)^3=(e^{35}+e^{48}+e^{67})^3
	=6e^{354867} \neq 0.\qedhere
	\end{gather*}
\end{proof}
We can finally prove:
\begin{theorem} \label{teo:linpert}
	If $\rho(A)\Omega$ is a linear perturbation of $\Omega$, i.e. $\rho(A)^2\Omega=0$, then the nilpotent part of $A$ has rank at most one.
\end{theorem}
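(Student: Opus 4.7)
By Proposition~\ref{prop:nilp} we may assume $A$ is nilpotent, and the task becomes showing that if the Jordan type of $A$ has rank at least $2$ then $\rho(A)^2\Omega \neq 0$. There are $22$ Jordan types in $\gl({\R^8}^*)$; two of them, $\Gamma_5$ and $\Gamma_6$, already satisfy the rank-$\leq 1$ conclusion, so the remaining $20$ must be ruled out one by one.

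The unifying strategy is to extract from $\rho(A)^2\Omega = 0$, for each Jordan type, an identity of the form
\[ u\hook v\hook \Omega \;=\; \sum_p \beta_p \wedge \gamma_p \]
with $u,v$ two linearly independent Jordan-basis vectors and only a few $1$-forms $\beta_p,\gamma_p$ on the right, so that $(u\hook v\hook\Omega)^3 = 0$, contradicting Proposition~\ref{prop:nondeg}.  Lemma~\ref{lemma:degeneracy} already supplies such an identity in the four cases $\Gamma_1,\Gamma_2,\Gamma_3,\Gamma_4$, with $u = w_i$ and $v = w_j$ the chain-starters of two distinct Jordan blocks of size at least $2$.

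For the $16$ remaining Jordan types the plan is to split into two groups.  Ten of them have at least two Jordan blocks of size $\geq 2$ and are not equal to $\Gamma_1,\dots,\Gamma_4$; for these, writing $A = \sum_i u_i\otimes\eta_i$ in tensor form, a direct computation using Remarks~\ref{claim:1}--\ref{claim:2} gives
\[ \rho(A)^2\Omega \;=\; \sum_j (A\eta_j)\wedge(u_j\hook\Omega) \;-\; 2\sum_{i<j}\eta_i\wedge\eta_j\wedge(u_i\hook u_j\hook\Omega), \]
and contraction by suitable $u_k$ combined with coefficient extraction in a Jordan-basis expansion of $\Omega$, modelled on the proof of Lemma~\ref{lemma:degeneracy}, produces the sought-for degeneracy.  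The other six types are the single-block configurations $(k,1,\ldots,1)$ with $k\geq 3$: writing the Jordan chain of the block as $\alpha_1\to\alpha_2\to\cdots\to\alpha_k\to 0$ with dual vectors $a_1,\ldots,a_k\in\R^8$, one has $A = \sum_{i=1}^{k-1} a_i\otimes\alpha_{i+1}$ and the formula above specialises to
\[ \rho(A)^2\Omega \;=\; \sum_{l=1}^{k-2}\alpha_{l+2}\wedge(a_l\hook\Omega) \;-\; 2\sum_{1\leq i<j\leq k-1}\alpha_{i+1}\wedge\alpha_{j+1}\wedge(a_i\hook a_j\hook\Omega). \]
Contracting this identity first by $a_2$ and then by $a_3$ kills many terms, because $\alpha_m(a_2)$ and $\alpha_m(a_3)$ vanish for the relevant $m$ and $a_2\hook a_2=a_3\hook a_3=0$; what survives is an explicit relation displaying $a_1\hook a_2\hook\Omega$ as an element of $\alpha_3\wedge V^* + \sum_{m\geq 4}\alpha_m\wedge V^*$ supported on very few $\alpha_m$-directions, whence $(a_1\hook a_2\hook\Omega)^3 = 0$ and Proposition~\ref{prop:nondeg} gives the contradiction.

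The main obstacle I foresee is the case analysis for the ten ``mixed'' multi-block Jordan types: for each one, the choice of contraction pattern differs, and the nonsingularity of the linear system extracted from the coefficient equations (analogous to the $3\times 3$ and $4\times 4$ systems in the $\Gamma_1$ and $\Gamma_2$ proofs of Lemma~\ref{lemma:degeneracy}) must be verified individually.  For the single-block types a variant of the contraction strategy may be needed when $k$ is large, to keep the number of surviving $\alpha_m$-directions in the final expression for $a_1\hook a_2\hook\Omega$ small enough to force the cube to vanish.
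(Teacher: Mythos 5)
Your skeleton matches the paper's: reduce to nilpotent $A$ via Proposition~\ref{prop:nilp}, run through the $22$ Jordan types, dispose of $\Gamma_1,\dots,\Gamma_4$ with Lemma~\ref{lemma:degeneracy} combined with Proposition~\ref{prop:nondeg}, and accept $\Gamma_5,\Gamma_6$. Your general formula
\[
\rho(A)^2\omega=\sum_j (A\eta_j)\wedge(u_j\hook\omega)-2\sum_{i<j}\eta_i\wedge\eta_j\wedge(u_i\hook u_j\hook\omega)
\]
is correct, and your contraction idea does work in at least the smallest single-block case $(3,1,1,1,1,1)$: there $\rho(A)^2\omega=\alpha_3\wedge(a_1\hook\omega)-2\alpha_2\wedge\alpha_3\wedge(a_1\hook a_2\hook\omega)$, and contracting with $a_2$ yields $\alpha_3\wedge(a_1\hook a_2\hook\omega)=0$, hence $a_1\hook a_2\hook\omega$ is decomposable-divisible by $\alpha_3$ and therefore degenerate.

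The gap is the remaining sixteen Jordan types, which constitute the bulk of the case analysis. The paper does not prove these by hand: it imports from \cite{ContiMadsenSalamon} the computation of the solution spaces $K_\Gamma$ for each diagram together with the verification that the map $\omega\mapsto(u\hook v\hook\omega)^3$ vanishes identically on $K_\Gamma$ for a suitable pair $u,v$, which by Proposition~\ref{prop:nondeg} excludes the diagram. Your proposal replaces that citation with a program whose decisive steps are explicitly deferred: the choice of contraction pattern for each of the ten mixed multi-block types, the nonsingularity of each resulting linear system, and the behaviour of the single-block argument for large $k$, where the sum $\sum_l\alpha_{l+2}\wedge(a_l\hook\omega)$ has many surviving terms. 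Note also that the four types treated in Lemma~\ref{lemma:degeneracy} all have blocks of size at most three, so almost every $A\eta_j$ vanishes and the first sum essentially disappears; for types such as $(4,4)$, $(5,3)$ or $(6,2)$ it does not collapse, and the coefficient extraction is qualitatively harder than the $3\times 3$ and $4\times 4$ systems you are modelling it on. As written, sixteen of the twenty cases to be excluded are asserted rather than proved; the argument is complete only once each of those verifications is carried out or the result is taken from \cite{ContiMadsenSalamon} as the paper does.
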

\begin{proof}
For each diagram $\Gamma$, we can fix a representative
endomorphism $A_{\Gamma}$ and compute the space
\begin{equation*}
	K_{\Gamma}=\Set{\omega \in \Lambda^4 {\R^8}^*| \rho(A_{\Gamma})^2\omega=0}.
\end{equation*}
The equation $\rho(A)^2\Omega=0$ has a solution with diagram 
$\Gamma$ if $\rho(A_{\Gamma})^2\omega=0$ for some $\omega$
in the same $\GL(8,\R)$-orbit as $\Omega$; by Proposition~\ref{prop:nondeg}, this implies
that for any linearly indipendent vectors $u,v\in\R^8$
the map 
\begin{gather*}
	K_{\Gamma} \rightarrow \Lambda^4 {\R^8}^* \\
	\omega \mapsto (u \hook v \hook \omega)^3
\end{gather*}
is not identically zero. As observed in \cite{ContiMadsenSalamon}, this rules out all cases except
$\Gamma_1,\dots,\Gamma_6$. Let $A$ be a solution with
$\Gamma$ one of the remaining diagrams. Using again Proposition~\ref{prop:nondeg},
	we have that $u \hook v \hook \Omega $ is nondegenerate for any choice of linearly independent vectors $u,v\in\R^8$; it follows from Lemma~\ref{lemma:degeneracy}
	that all nilpotent solutions
	of $\rho(A)^2\Omega=0$ are either zero or rank-one nilpotent
	endomorphisms. 
\end{proof}

\begin{remark}
Linear perturbations of a $\Spin(7)$ form lie in the module $\Lambda^4_7\oplus\Lambda^4_{35}$.
Indeed, the map 
\[\lie{sl}(8,\R)\to \Lambda^4\R^8, \quad A\mapsto \rho(A)\Omega,\]
is $\Spin(7)$-equivariant and its kernel $\spin(7)$ has dimension $21$; the image is therefore the only $\Spin(7)$-module of dimension $42$ inside $\Lambda^4\R^8$.

Notice that we cosider $\lie{sl}(8,\R)$ instead of $\gl(8,\R)$ because we assume $A$ to be nilpotent.
\end{remark}

\section{A cohomogeneity one description of the Bryant-Salamon metric}
Recall from \cite{BryantSalamon} that the spinor bundle $S$ over $S^4$ carries a cohomogeneity one metric with holonomy $\Spin(7)$; this metric has cohomogeneity one under the action of $\Sp(2)$. In this section we give a description of these metrics in terms of cohomogeneity one actions which will be needed in order to study the linear perturbations.

Explicitly, the Lie group $\Sp(2)=\{g\in \GL(2,\h)\mid gg^*=I\}$ contains two copies of $\Sp(1)$, i.e.
\[
\Sp(1)_+=\left\{\begin{pmatrix} p & 0 \\ 0 & 1 \end{pmatrix} | p\in\Sp(1)\right\}, \quad 
\Sp(1)_-=\left\{\begin{pmatrix} 1 & 0 \\ 0 & q \end{pmatrix} | q\in\Sp(1)\right\}.\]
At the Lie algebra level, 
\[
            \mathfrak{sp}(2)=\Set{\begin{pmatrix}
            a & b \\
            -\overline{b} &c 
            \end{pmatrix}}, 
            \lie{sp}_+=\Set{\begin{pmatrix}
            a & 0 \\
            0 & 0\end{pmatrix} }, 
            \lie{sp}_-=\Set{\begin{pmatrix}
            0 & 0 \\
            0 & c\end{pmatrix} },
\]
with $a,c\in\Im\h$, $b\in\h$.

The spinor bundle $S$ has the form
\[S=(\Sp(2)\times\h) / (\Sp(1)_+\times \Sp(1)_-),\]
where $(p,q)\in\Sp(1)_+\times \Sp(1)_-$ acts on the right by
\[(g,v)(p,q)=(g(p,q),p^{-1}vq).\]
$S$ is of cohomogeneity one under the action of $\Sp(2)$; there is one singular orbit, namely $\Sp(2)/\Sp(1)_+\times\Sp(1)_-=S^4$, and  the complement of the singular orbit has the form
\[S\setminus S^4=\Sp(2)/\Sp(1)_+\times \R_+.\]
Notice that the following 
    \begin{equation}
    \label{eq:basis} 
    \begin{gathered}	
    A_1=\frac{1}{\sqrt{12}}\begin{pmatrix}
    i & 0 \\
    0 &0 
    \end{pmatrix},
    A_2=\frac{1}{\sqrt{12}}\begin{pmatrix}
    j & 0 \\
    0 &0 
    \end{pmatrix},
    A_3=\frac{1}{\sqrt{12}}\begin{pmatrix}
    k & 0 \\
    0 &0 
    \end{pmatrix},\\	
    A_4=\frac{1}{\sqrt{12}}\begin{pmatrix}
    0 & 0 \\
    0 &i 
    \end{pmatrix},
    A_5=\frac{1}{\sqrt{12}}\begin{pmatrix}
    0 & 0 \\
    0 &j 
    \end{pmatrix},
    A_6=\frac{1}{\sqrt{12}}\begin{pmatrix}
    0 & 0 \\
    0 &k 
    \end{pmatrix},\\	
    X_1=\frac{1}{\sqrt{24}}\begin{pmatrix}
    0 & i \\
    i &0 
    \end{pmatrix},
    X_2=\frac{1}{\sqrt{24}}\begin{pmatrix}
    0 & j \\
    j &0 
    \end{pmatrix},
    X_3=\frac{1}{\sqrt{24}}\begin{pmatrix}
    0 & k \\
    k &0 
    \end{pmatrix},
        X_4=\frac{1}{\sqrt{24}}\begin{pmatrix}
        0 & 1 \\
        -1 &0 
        \end{pmatrix}.
    \end{gathered}
\end{equation}
is an orthonormal basis of $\lie{sp}(2)$ with respect to the Killing metric.
Let $a=a_0 +ia_1+ja_2+ka_3$ be the standard real coordinates
in $\H$; following \cite{BryantSalamon}, we define $\h$-valued one-forms on $\Sp(2)\times\h$
    \begin{equation*}
        \phi=iA^4+jA^5+kA^6, \quad \omega=X^4+iX^1+jX^2+kX^3,\quad 
        \alpha=d a-a\phi;
    \end{equation*}
we then define $\im\h$-valued two-forms 
\[B=\frac{1}{2} (\bar{\alpha} \wedge \alpha), \quad 
 \Omega=\frac{1}{2} (\bar{\omega} \wedge \omega).
\]
When needed, we will use  indices to indicate components in $\H$, i.e.
    \begin{gather*}
iB_1+jB_2+kB_3=i(\alpha_0 \wedge \alpha_1-\alpha_2 \wedge \alpha_3)+j(\alpha_0 \wedge \alpha_2-\alpha_3 \wedge \alpha_1)+k(\alpha_0 \wedge \alpha_3-\alpha_1 \wedge \alpha_2).        
    \end{gather*}
The Bryant-Salamon $4$-form is a linear combination of  the forms
    \begin{gather*}
        \psi_1=\alpha_0 \wedge \alpha_1 \wedge \alpha_2 \wedge\alpha_3,\quad
        \psi_=B_1 \wedge \Omega_1+ B_2 \wedge \Omega_2+
        B_3 \wedge \Omega_3,\\
        \psi_3=\omega_0 \wedge \omega_1 \wedge \omega_2 \wedge
        \omega_3,
    \end{gather*}
with coefficients determined by the smooth functions on $ \H$
    \begin{equation*}
        f(r)=4\bigl(1+r\;\bigr)^{-2/5}, \quad g(
        r)=5k\bigl(1+r \;\bigr)^{3/5},
    \end{equation*}
where we have set $r=a\bar{a}=\norm a ^2$. More precisely, the Bryant-Salamon $4$-form $\Phi \in \Omega^4\bigl(        \Sp(2) \times \H \bigr)$ is defined as        \begin{equation}            \Phi=f^2\psi_1+fg\psi_2+g^2\psi_3.        \end{equation}
Since $\Phi$ is basic relative to the action of $\Sp(1) \times \Sp(1)$, it induces a form on the quotient $S=\Sp(2) \times \H/\Sp(1) \times \Sp(1)$, also to be denoted by $\Phi$.
    
	\begin{proposition} \label{prop:BSform1}
Under the inclusion
\[
			\tilde{\chi}\colon \Sp(2) \times \R_+ \rightarrow \Sp(2) \times \H, \quad 
			(g,t) \mapsto (g,\sqrt{t}),\]
the Bryant-Salamon $4$-form pulls back to 
		\begin{gather*}
		 \tilde{\chi}^*\Phi=-\frac{d t}{2} \wedge \bigg(tf(t)^2A^{456}+
		f(t)g(t)\bigl(A^4 \wedge (-X^{14}-X^{23})+
		A^5 \wedge (-X^{24}+X^{13})\\+
		A^6 \wedge (-X^{34}-X^{12})\bigr) \bigg)
		-tf(t)g(t)\bigl(A^{56} \wedge (-X^{14}-X^{23})+
		A^{64} \wedge (-X^{24}+X^{13})\\+
		A^{45} \wedge (-X^{34}-X^{12})\bigr)-g(t)^2X^{1234}.
		\end{gather*}
	\end{proposition}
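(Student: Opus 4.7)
The plan is to reduce the statement to a direct substitution. Since the forms $A^i$ and $X^i$ are pulled back from the $\Sp(2)$-factor, they are unchanged by $\tilde{\chi}$; the only nontrivial step is to compute $\tilde{\chi}^*\alpha$. Along the image of $\tilde{\chi}$ the quaternion coordinate is $a=\sqrt t$, a real scalar, so that $a_0=\sqrt t$ and $a_1=a_2=a_3=0$. Hence $da$ reduces to $(2\sqrt t)^{-1}dt$ in its real component only, and the quaternion product $a\phi$ becomes $\sqrt t(iA^4+jA^5+kA^6)$. Reading off real and imaginary parts of $\alpha=da-a\phi$ yields
\[
\tilde{\chi}^*\alpha_0=\frac{dt}{2\sqrt t},\qquad
\tilde{\chi}^*\alpha_k=-\sqrt t\,A^{k+3}\quad(k=1,2,3).
\]

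Given these formulas, the rest is bookkeeping. I would first wedge the four pullbacks to obtain $\tilde{\chi}^*\psi_1=-\tfrac{t}{2}\,dt\wedge A^{456}$. Next, I would substitute into the component formulas for $B_1,B_2,B_3$ displayed in the text, observing that each $\tilde{\chi}^*B_i$ splits naturally into a $dt$-piece $-\tfrac12 dt\wedge A^{i+3}$ plus a pure $\Sp(2)$-piece $-tA^{jk}$, where $jk$ equals $56$, $64$, $45$ respectively (the pattern comes from the definition of $B_i$ as $\alpha_0\wedge\alpha_i-\alpha_{i+1}\wedge\alpha_{i+2}$ with cyclic indices). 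Since the $\Omega_i$ are pure $\Sp(2)$-forms already expressed as $-X^{14}-X^{23}$, $-X^{24}+X^{13}$, $-X^{34}-X^{12}$, wedging each $\tilde{\chi}^*B_i$ with $\Omega_i$ and summing produces exactly the $dt$-summand and non-$dt$-summand in the $fg$-contribution of the stated expression. Finally, $\tilde{\chi}^*\psi_3=X^4\wedge X^1\wedge X^2\wedge X^3=-X^{1234}$ from reordering.

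Assembling the three pieces with the scalar coefficients $f^2,fg,g^2$ yields the claim. The only delicate points are the signs coming from the quaternion product $a\phi$ — which determine the minus signs in $\tilde{\chi}^*\alpha_k$ — and the reordering of the $X^j$'s in the computation of $\tilde{\chi}^*\psi_3$; neither poses any serious obstacle, and the entire proof is a careful accounting of these signs together with the grouping of the $dt$-terms into the overall factor $-\tfrac{dt}{2}\wedge(\,\cdot\,)$ that appears in the statement.
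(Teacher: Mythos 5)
Your proposal is correct and follows essentially the same route as the paper: pull back $\alpha$ along $\tilde\chi$ to get $\tilde\chi^*\alpha=\frac{dt}{2\sqrt t}-\sqrt t(iA^4+jA^5+kA^6)$, then compute $\tilde\chi^*\psi_1$, $\tilde\chi^*\psi_2$ (via the $B_i$ and $\Omega_i$) and $\tilde\chi^*\psi_3=-X^{1234}$, and assemble with the coefficients $f^2, fg, g^2$. The signs and the splitting of each $\tilde\chi^*B_i$ into a $dt$-part and a $-tA^{jk}$-part all agree with the paper's computation.
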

\begin{proof}
By definition we have 
	$\tilde{\chi}^*a=\sqrt{t}$, $\tilde{\chi}^*r=t$, so
	\begin{equation*}
		\tilde{\chi}^*\alpha= \frac{d t}{2\sqrt{t}}-i\sqrt{t}\phi_1-j\sqrt{t}\phi_2-k\sqrt{t}\phi_3.
	\end{equation*}
We obtain
	\begin{align*}
		\tilde{\chi}^*\psi_1&=-t\frac{d t}{2} \wedge \phi_1 \wedge \phi_2 \wedge \phi_3=-t\frac{d t}{2} \wedge A^{456};\\	
		\tilde{\chi}^*\psi_2
&=-\frac{d t}{2} \wedge
		(\phi_1 \wedge \Omega_1+\phi_2 \wedge \Omega_2+\phi_3\wedge \Omega_3)
		-t(\phi_2 \wedge \phi_3 \wedge \Omega_1+\phi_3 \wedge \phi_1 \wedge \Omega_2+\phi_1 \wedge \phi_2 \wedge \Omega_3)\\	 
&=-\frac{d t}{2}\wedge\bigl( 
A^4  \wedge(-X^{14}-X^{23})+
	  A^5 \wedge (-X^{24}+X^{13})+
	  A^6 \wedge (-X^{34}-X^{12})\bigr)	\\
&\quad -t\left(A^{56} \wedge (-X^{14}-X^{23})+
	  A^{64} \wedge (-X^{24}+X^{13})+
	  A^{45} \wedge (-X^{34}-X^{12})\right);\\
\tilde{\chi}^*\psi_3&=	\omega_0 \wedge \omega_1 \wedge \omega_2 \wedge	\omega_3=-X^{1234}.
	\end{align*}
The statement follows immediately.	
\end{proof}

\section{Linear perturbations of the Bryant-Salamon metric}
In this section we study $\Sp(2)$-invariant linear perturbations of the Bryant-Salamon metric.

By Theorem~\ref{teo:linpert}, a linear perturbation of a $\Spin(7)$-structure is obtained by the choice of a rank one nilpotent endomorphism of the tangent bundle at each point. Thus, the global data for a linear perturbation is given by the choice of a vector field $X$ and a one form $\alpha$ with $\alpha(X)=0$. Since we work in the $\Sp(2)$-invariant setting, we will require both vector field and form to be invariant.

Thus, the first step is to construct an $\Sp(2)$-invariant vector field on the cohomogeneity one manifold $\Sp(2)/\Sp(1)_+\times\R_+$. We will need the following observation:
		\begin{lemma} Let a Lie group $G$ act transitively on $M$, and let $H$ be the stabilizer at a point $m$. Then $X \in \mathfrak{g}$ defines a $G$-invariant vector field on $M$ of the form 
			\begin{equation*}
				X^+_{gm}=g^* \frac{d}{dt}|_{t=0} \; \exp(tX)m
			\end{equation*} 
			  if and only if $X$ belongs to 
			\begin{equation} \label{eq:suitable}
				\mathfrak{n}(H)=\Set{X \in \g |\Ad_hX-X \in \mathfrak{h} \quad \forall h \in H}
			\end{equation}
All $G$-invariant vector fields on $M$ are of this form.
		\end{lemma}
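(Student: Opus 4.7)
The plan is to unpack when the proposed formula defines a vector field, and then to match every $G$-invariant vector field with some such $X$.

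First I would address well-definedness. Two elements $g,g' \in G$ satisfy $gm = g'm$ exactly when $g' = gh$ for some $h \in H$. Since $hm = m$, the chain rule gives $(gh)^* = g^* \circ h^*$ at $m$, and the identity $h\exp(tX)h^{-1} = \exp(t\Ad_h X)$ rewrites $h^* \frac{d}{dt}|_{t=0}\exp(tX)m$ as $\frac{d}{dt}|_{t=0}\exp(t\Ad_h X)m$. Thus $X^+$ is well-defined if and only if, writing $X^\#_m$ for the fundamental vector $\frac{d}{dt}|_{t=0}\exp(tX)m$, we have $X^\#_m = (\Ad_h X)^\#_m$ for all $h \in H$. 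Because $X^\#_m = 0$ is equivalent to $\exp(tX) \in H$ for all $t$, that is $X \in \mathfrak{h}$, this well-definedness condition rewrites as $\Ad_h X - X \in \mathfrak{h}$ for all $h \in H$, which is exactly $X \in \mathfrak{n}(H)$. Conversely, if $X \in \mathfrak{n}(H)$ the same computation shows the value is independent of the choice of $g$; smoothness follows from choosing a local smooth section of $G \to M$ near any $gm$, and $G$-invariance is built into the definition: $(g_0)^* X^+_{gm} = (g_0)^* g^* X^\#_m = X^+_{g_0 g m}$.

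For the final claim, let $Y$ be a $G$-invariant vector field on $M$. Transitivity of $G$ implies the fundamental vector field map $\mathfrak{g} \to T_m M$, $X \mapsto X^\#_m$, is surjective, so I can pick some $X \in \mathfrak{g}$ with $X^\#_m = Y_m$. For $h \in H$, $G$-invariance of $Y$ gives $Y_m = Y_{hm} = h^* Y_m$, so $h^* X^\#_m = X^\#_m$; rewriting $h^* X^\#_m = (\Ad_h X)^\#_m$ and using $\ker(X \mapsto X^\#_m) = \mathfrak{h}$, this forces $\Ad_h X - X \in \mathfrak{h}$, so $X \in \mathfrak{n}(H)$. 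Then $Y_{gm} = g^* Y_m = g^* X^\#_m = X^+_{gm}$, so $Y = X^+$.

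No significant obstacle is expected; the whole argument is a routine unwinding of definitions once one isolates the identification $\ker(\mathfrak{g} \to T_m M) = \mathfrak{h}$, which is standard for transitive actions.
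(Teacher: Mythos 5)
Your proof is correct and follows essentially the same route as the paper's: well-definedness of $X^+$ is reduced via $h\exp(tX)h^{-1}=\exp(t\Ad_h X)$ to the condition $X\in\mathfrak{n}(H)$, and the converse uses transitivity to realize $Y_m$ as a fundamental vector at $m$ and then invokes the first part. You are slightly more explicit than the paper in isolating $\ker(X\mapsto X^{\#}_m)=\mathfrak{h}$ and in addressing smoothness, but the argument is the same.
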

		\begin{proof}
The vector field $X^+$ is well defined and invariant if and only if $X^+_{gm}=X^+_{g'm}$ whenever
 $gm=g'm$; in other words, we need $X^+_{gm}=X^+_{ghm}$ for all $g \in G, h \in H$. Since
\begin{gather*}
X^+_{ghm}=g^*h^*\frac{d}{dt}|_{t=0} \; \exp(tX)m =	g^*\frac{d}{dt}|_{t=0} \; h \;exp(tX) h^{-1}m\\				=g^*\frac{d}{dt}|_{t=0} \; \exp(t\Ad_hX)m,
\end{gather*}
we obtain that $X^+$ is well defined when
			\begin{equation*}
				\frac{d}{dt}|_{t=0} \; exp(tX)m =\frac{d}{dt}|_{t=0} \; \exp(t\Ad_hX)m \quad \forall h \in H,
			\end{equation*}	
which is equivalent to $X$ lying in $\lie{n}(H)$.

Conversely, given an invariant vector field $Y$ on $M$, we have $Y_{gm}=g^*Y_m$, where 
\[Y_m=\frac{d}{dt}|_{t=0} \exp(tX)m, \quad X\in\lie g.\]
By invariance, $Y=X^+$, and $X$ lies in $\lie{n}(H)$ by the first part.
\end{proof}
	
\begin{proposition}\label{prop:normalizer}
Relative to the action of $G=Sp(2)$ on $Sp(2)/\Sp(1)_+$ we have
		\begin{equation}
			\mathfrak{n}(\Sp(1)_+)=\splie(1)_+ \times \splie(1)_-.
		\end{equation}
	\end{proposition}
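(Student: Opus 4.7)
The plan is to apply the preceding lemma directly, with $G=\Sp(2)$ and $H=\Sp(1)_+$. The task then reduces to characterizing those $X\in\splie(2)$ for which $\Ad_hX-X\in\splie(1)_+$ for every $h\in\Sp(1)_+$; since $\Sp(1)_+$ sits in the top-left block of $\Sp(2)$, this is a concrete block-matrix computation.

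Concretely, I would write $X=\begin{pmatrix} a & b \\ -\bar b & c \end{pmatrix}$ with $a,c\in\Im\h$ and $b\in\h$, and parametrize a general element of $\Sp(1)_+$ as $h=\begin{pmatrix} p & 0 \\ 0 & 1 \end{pmatrix}$ with $p\in\h$ of unit norm, so that $h^{-1}=\begin{pmatrix} \bar p & 0 \\ 0 & 1 \end{pmatrix}$. Direct multiplication gives
\[
\Ad_hX-X=\begin{pmatrix} pa\bar p - a & (p-1)b \\ -\bar b(\bar p-1) & 0 \end{pmatrix}.
\]

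For this to lie in $\splie(1)_+$, every entry outside the top-left block must vanish. The bottom-right is already zero, and the two off-diagonal conditions both amount to $(p-1)b=0$ for all unit $p$; choosing any $p\neq 1$ makes $p-1$ a nonzero, hence invertible, quaternion and forces $b=0$. Conversely, once $b=0$ the only surviving entry is $pa\bar p-a$, which lies in $\Im\h=\splie(1)_+$ by $\Ad_{\Sp(1)}$-invariance of the imaginary quaternions.

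Combining the two implications yields $X\in\mathfrak{n}(\Sp(1)_+)$ iff $b=0$, which is exactly the identification $X\in\splie(1)_+\times\splie(1)_-$ claimed in the statement. The argument is essentially routine; the one conceptual observation is that the absence of zero divisors in $\h$ lets a single nontrivial element of $\Sp(1)_+$ already detect $b=0$, so no serious obstacle arises.
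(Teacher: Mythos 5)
Your proof is correct and follows essentially the same route as the paper: a direct block-matrix computation of $\Ad_hX-X$ for $h\in\Sp(1)_+$, reducing the membership condition to $pb=b$ for all unit $p$ and hence $b=0$. The only difference is that you spell out the invertibility of $p-1$ and the $\Ad$-invariance of $\Im\h$, which the paper leaves implicit.
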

\begin{proof}
An element
	\begin{equation*}
	\begin{pmatrix}
	x & y \\
	-\bar{y} &w 
	\end{pmatrix}\in\splie(2)
	\end{equation*}
lies in $\lie{n}(\Sp(1)_+)$ if and only if for all $p$ in $Sp(1)$ we have 
	\begin{equation*}
	\begin{pmatrix}
	p & 0 \\
	0 &1 
	\end{pmatrix}
	\begin{pmatrix}
	x & y \\
	-\bar{y} &w 
	\end{pmatrix}
	\begin{pmatrix}
	\bar{p} & 0 \\
	0 &1 
	\end{pmatrix}-
	\begin{pmatrix}
	x & y \\
	-\bar{y} &w 
	\end{pmatrix}\in \splie(1)_+.
	\end{equation*} 
This is equivalent to $py=y$ for all $p$, i.e. $y=0$, so the statement is proved.
\end{proof}
Summing up, we have a linear map
\[\splie(1)_+\times\splie(1)_-\to \mathfrak{X}_{Sp(2)}(Sp(2)/Sp(1)_+), \quad X\mapsto X^+;\]
its kernel is $\splie(1)_+$, showing that invariant
vector fields on $\Sp(2)/\Sp(1)_+$ take the form 
 \[X=x_4A_4+x_5A_5+x_6A_6 \in \splie(1)_-.\]
\begin{lemma}
\label{lemma:invariantfield}
Every $\Sp(2)$-invariant vector field $Y$ on $\Sp(2)/\Sp(1)_+$ satisfies
\begin{align*}
\Lie_Y&\left(A^{56} \wedge (-X^{14}-X^{23})+
	  A^{64} \wedge (-X^{24}+X^{13})+
	  A^{45} \wedge (-X^{34}-X^{12})\right)= 0\\  \Lie_Y& X^{1234}=0	  \end{align*}
\end{lemma}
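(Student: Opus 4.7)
The plan is to lift the computation from the quotient $\Sp(2)/\Sp(1)_+$ to the group $\Sp(2)$ itself and invoke the general principle that left-invariant differential forms on a Lie group are annihilated by the Lie derivative along right-invariant vector fields. By the discussion preceding the lemma, every $\Sp(2)$-invariant vector field $Y$ on $\Sp(2)/\Sp(1)_+$ arises as the fundamental vector field of some $X=x_4A_4+x_5A_5+x_6A_6\in\splie(1)_-$. Writing $\pi\colon\Sp(2)\to\Sp(2)/\Sp(1)_+$ for the projection, the fundamental vector field on $\Sp(2)$ for the same $X$ (relative to the action by left translation of $\Sp(2)$ on itself) is the right-invariant field $\widetilde X$ with $\widetilde X_e=X$, and $\widetilde X$ is $\pi$-related to $Y$ by construction.

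Since the $A^i$ and $X^j$ are by definition left-invariant Maurer-Cartan forms on $\Sp(2)$, every wedge-product polynomial in them is left-invariant; in particular this holds for the lifts of
\[
\omega_1=A^{56}\wedge(-X^{14}-X^{23})+A^{64}\wedge(-X^{24}+X^{13})+A^{45}\wedge(-X^{34}-X^{12}),\qquad \omega_2=X^{1234}.
\]
The flow of $\widetilde X$ consists of the left translations $L_{\exp(tX)}$, which preserve every left-invariant form. Hence $\Lie_{\widetilde X}\pi^*\omega_i=0$ for $i=1,2$, and by naturality of the Lie derivative $\pi^*(\Lie_Y\omega_i)=\Lie_{\widetilde X}(\pi^*\omega_i)=0$. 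Since $\pi$ is a surjective submersion, $\pi^*$ is injective on forms on the base, and the conclusion follows.

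The one nontrivial point is to verify that $\omega_1$ and $\omega_2$ genuinely descend from $\Sp(2)$ to $\Sp(2)/\Sp(1)_+$, i.e.\ that they are $\Sp(1)_+$-basic. For $\omega_2=X^{1234}$ this is immediate, as $\Sp(1)_+$ acts on $\Span{X_1,\dots,X_4}\simeq\H$ by left quaternion multiplication, which lies in $\SO(4)$ and preserves the volume form. For $\omega_1$ one checks directly, using the explicit action of $\Ad_p$ on $X_1,\dots,X_4$, that each of the three two-forms in parentheses is $\Sp(1)_+$-invariant; together with the automatic invariance of $A^{56},A^{64},A^{45}$ (since $\splie(1)_+$ and $\splie(1)_-$ commute), this yields the required basicity. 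This brief descent check is the only concrete calculation; the rest is a formal consequence of left/right-invariance duality.
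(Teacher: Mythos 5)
Your argument has a genuine error at its key step: the lift of $Y$ to $\Sp(2)$ is misidentified. An invariant vector field $Y=X^+$ with $X\in\splie(1)_-$ is defined by $X^+_{gm}=g_*\frac{d}{dt}|_{t=0}\exp(tX)m$, so its flow is $g\Sp(1)_+\mapsto g\exp(tX)\Sp(1)_+$, i.e.\ \emph{right} translation on the quotient; the vector field on $\Sp(2)$ that is $\pi$-related to $Y$ is therefore the \emph{left}-invariant field with value $X$ at the identity, not the right-invariant one. (The right-invariant field is $\pi$-related to the fundamental vector field of the left $\Sp(2)$-action, which is a different object: fundamental vector fields transform by the adjoint action and are not $G$-invariant.) Consequently the flow of the correct lift consists of right translations $R_{\exp(tX)}$, which do \emph{not} preserve left-invariant forms; they act on $\Lambda^\bullet\splie(2)^*$ by the coadjoint action, so $\Lie_Y\omega=0$ holds exactly when $\omega$ is annihilated by $\ad_X$ acting as a derivation, equivalently when $\omega$ is $\Ad(\Sp(1)_-)$-invariant. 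Your formal argument, as written, would prove that $\Lie_Y$ kills \emph{every} basic left-invariant form, which is false: $A^5$ is basic (it annihilates $\splie(1)_+$ and is $\Ad(\Sp(1)_+)$-invariant because the two copies of $\splie(1)$ commute), yet $\Lie_{A_4^+}A^5=A_4\hook dA^5\neq0$ since $[A_4,A_6]$ is a nonzero multiple of $A_5$.

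The upshot is that the real content of the lemma --- the $\Ad(\Sp(1)_-)$-invariance of the two specific forms --- is never verified in your proof; the invariance you do check at the end is under $\Sp(1)_+$, which is what makes the forms descend to the quotient but is not the invariance governing $\Lie_Y$. The paper supplies precisely the missing computation: it decomposes $\splie(2)$ as an $\Sp(1)_-$-module into $3\R+\h+\splie(1)_-$, notes that $\Lambda^2\h$ contains a copy of $\splie(1)_-$ via the equivariant map sending $A_4,A_5,A_6$ to $\Omega_1,\Omega_2,\Omega_3$, deduces that $A^{56}\wedge\Omega_1+A^{64}\wedge\Omega_2+A^{45}\wedge\Omega_3$ is $\Sp(1)_-$-invariant, and observes that $X^{1234}$ is the $\Sp(1)_-$-invariant volume form of $\h$. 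To repair your proof you would need to replace the "left translations preserve left-invariant forms" step with this (or an equivalent) $\Sp(1)_-$-invariance check.
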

\begin{proof}
As an $\Sp(1)_-$-module, $\splie(2)$ decomposes as
\[3\R+ \h+\splie(1)_-=\Span{A_1,A_2,A_3}+ \Span{X_1,X_2,X_3,X_4}+\Span{A_4,A_5,A_6}
,\]
with $\Lambda^2\h$ splitting as $3\R+\splie(1)_-$. The inclusion of $\splie(1)_-$ in $\Lambda^2\h$ is realized by the $\Sp(1)_-$-equivariant map 
\[A_4\mapsto \Omega_1, A_5\mapsto \Omega_2, A_6\mapsto \Omega_3.\]
It follows that $A^{56}\wedge\Omega_1+A^{64}\wedge\Omega_2 + A^{45}\wedge\Omega_3$ is $\Sp(1)_-$-invariant. As an element of $\Lambda^4\H$, $X^{1234}$ is also $\Sp(1)_-$-invariant. 

Writing $Y=aA_4^++bA_5^++cA_6^+$, the statement follows.
\end{proof}

\begin{theorem}
Given smooth even functions $a,b,c\colon\R\to\R$, the $4$-form
\begin{equation}
\label{eqn:perturbed}
\Phi + dt\wedge (a(t)A_4^++b(t)A_5^++c(t)A_6^+)\hook\Phi
\end{equation}
is closed and defines an $\Sp(2)$-invariant metric with holonomy contained in $\Spin(7)$.
\end{theorem}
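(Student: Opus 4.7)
The plan is to verify three properties of the form in \eqref{eqn:perturbed}, call it $\widetilde\Phi$: that it defines a $\Spin(7)$-structure pointwise, that it is $\Sp(2)$-invariant, and that it is closed. Closedness of a $\Spin(7)$-form implies that it is parallel for the induced Levi-Civita connection, so the associated metric has holonomy contained in $\Spin(7)$.

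To establish the pointwise $\Spin(7)$-structure property, set $Y=a(t)A_4^++b(t)A_5^++c(t)A_6^+$. Since $Y$ takes values in $\splie(1)_-$, it is tangent to the principal orbit $\Sp(2)/\Sp(1)_+$, so $dt(Y)=0$. Hence the endomorphism $A=Y\otimes dt$ of $T_xM$ is nilpotent of rank one pointwise, and by Remark~\ref{claim:1} one has $\rho(A)\Phi=dt\wedge(Y\hook\Phi)$, so that $\widetilde\Phi=\Phi+\rho(A)\Phi$. By Remark~\ref{claim:2} and the proposition recalled from \cite{ContiMadsenSalamon}, $\widetilde\Phi$ lies pointwise in the same $\GL(8,\R)$-orbit as $\Phi$, hence defines a $\Spin(7)$-structure. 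For $\Sp(2)$-invariance, $\Phi$, $dt$ and each $A_i^+$ are invariant (the vector fields by Proposition~\ref{prop:normalizer}), while $a,b,c$ are functions only of the invariant coordinate $t$.

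The substantive computation is closedness. Since $d\Phi=0$, one needs only $d(dt\wedge(Y\hook\Phi))=0$. By the graded Leibniz rule and Cartan's formula,
\[
d\bigl(dt\wedge(Y\hook\Phi)\bigr)=-dt\wedge d(Y\hook\Phi)=-dt\wedge\Lie_Y\Phi,
\]
using $d\Phi=0$. Writing $Y=\sum_{i=4}^{6}f_i(t)A_i^+$ with $(f_4,f_5,f_6)=(a,b,c)$, I would expand
\[
\Lie_Y\Phi=\sum_i\bigl(f_i\Lie_{A_i^+}\Phi+df_i\wedge(A_i^+\hook\Phi)\bigr)=\sum_i f_i'(t)\,dt\wedge(A_i^+\hook\Phi),
\]
where $\Lie_{A_i^+}\Phi=0$ by $\Sp(2)$-invariance of $\Phi$ and $df_i=f_i'(t)\,dt$. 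Thus $\Lie_Y\Phi$ is a multiple of $dt$, and the final wedge with $dt$ annihilates it.

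The main obstacle I anticipate is not the closedness itself, which collapses cleanly once one uses invariance of $\Phi$, but the smoothness of $\widetilde\Phi$ across the singular orbit $S^4\subset S$ at $t=0$. The evenness hypothesis on $a,b,c$ is precisely what is needed: written in terms of the squared radial coordinate $t=\norm{a}^2$, even functions of $t$ pull back to smooth functions near the zero section, and this is what makes the combination $a(t)A_4^++b(t)A_5^++c(t)A_6^+$ extend smoothly across the singular orbit.
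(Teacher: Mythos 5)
Your overall architecture matches the paper's: rank-one nilpotent perturbation gives a $\Spin(7)$-structure pointwise, closedness reduces via Cartan's formula to $dt\wedge\Lie_Y\Phi=0$, and Fern\'andez's theorem converts closedness into holonomy reduction. But there is a genuine gap at the one substantive step. You assert that $\Lie_{A_i^+}\Phi=0$ \emph{by $\Sp(2)$-invariance of $\Phi$}. This is a non sequitur: the $A_i^+$ are $\Sp(2)$-\emph{invariant} vector fields, not the fundamental vector fields generating the $\Sp(2)$-action. On a homogeneous space $G/H$ (or on $G$ itself), left-invariance of a form does not imply that its Lie derivative along left-invariant vector fields vanishes --- on the Heisenberg group with $de^3=e^{12}$ one has $\Lie_{e_1}e^3=e^2\neq 0$, even though both $e_1$ and $e^3$ are invariant. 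Here the $A_i^+$ generate the residual right action of $\Sp(1)_-$ on $\Sp(2)/\Sp(1)_+$, and $\Lie_{A_i^+}$ acts on the invariant forms $A^j$, $X^k$ through $\ad_{A_i}$; for instance $\Lie_{A_4^+}A^5$ is a nonzero multiple of $A^6$. So the vanishing of $\Lie_{A_i^+}$ on (the orbit components of) $\Phi$ is a nontrivial fact that must be proved, not a formal consequence of invariance.

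This is exactly the content of the paper's Lemma~\ref{lemma:invariantfield}: one decomposes $\splie(2)$ as an $\Sp(1)_-$-module, checks that $A_i\mapsto\Omega_i$ is $\Sp(1)_-$-equivariant, and concludes that the combinations $A^{56}\wedge\Omega_1+A^{64}\wedge\Omega_2+A^{45}\wedge\Omega_3$ and $X^{1234}$ --- i.e.\ the components of $\Phi$ not containing $dt$ --- are annihilated by $\Lie_Y$. (Note the paper only needs the $dt$-free part of $\Phi$ to be killed, since everything is wedged with $dt$ at the end; your stronger claim $\Lie_{A_i^+}\Phi=0$ happens to hold as well, but requires the same representation-theoretic check on every summand of $\tilde\chi^*\Phi$.) Your remaining points --- the Leibniz expansion of $\Lie_Y$ in the functions $a,b,c$, and the remark on smooth extension across the zero section --- are fine, but as written the proof is missing its central lemma.
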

\begin{proof}
Observe first that the vector field $a(t)A_4^++b(t)A_5^++c(t)A_6^+$ is globally defined and vanishes on the special orbit, i.e. the zero section of the spinor bundle.

The $4$-form \eqref{eqn:perturbed} is a linear perturbation of the Bryant-Salamon form by a nilpotent endomorphism of rank one (see Remark~\ref{claim:2}), so it defines again a $\Spin(7)$-structure. In order to check that it is closed, write
$Y=a(t)A_4^++b(t)A_5^++c(t)A_6^+$; we have
\[d(dt\wedge Y\hook\Phi)=-dt\wedge d (Y\hook\Phi)=-dt\wedge\Lie_Y\Phi.\]
Since $Y$ is $\Sp(2)$-invariant, by Lemma~\ref{lemma:invariantfield} we have that $\Lie_Y$ annihilates the restriction of $\Phi$ to each principal orbit $\{t=t_0\}$, and therefore $dt\wedge\Lie_Y\Phi=0$. By \cite{Fernandez:AClassification}, the metric defined by the perturbed form has holonomy contained in $\Spin(7)$.
\end{proof}

It is now natural to ask whether the perturbed metrics are isometric to the Bryant-Salamon metric. It turns out that they are isometric under an $\Sp(2)$-equivariant diffeomorphism, due to the following:
\begin{lemma}
Any $\Sp(2)$-invariant vector field on $S$ is a Killing field for the Bryant-Salamon metric.
\end{lemma}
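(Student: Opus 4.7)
The plan is to decompose the Bryant-Salamon metric into its cohomogeneity-one sum $g_{BS} = \frac{f(t)^2}{4t}\,dt^2 + g_t$ (with $g_t$ the induced metric on the principal orbit $\{t=t_0\}$), and to show that the flow of $Y$ acts by isometries on each summand separately.

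By Proposition~\ref{prop:normalizer} and the remark following it, any $\Sp(2)$-invariant vector field on $S$ tangent to the principal orbits takes the form $Y = a(t)A_4^+ + b(t)A_5^+ + c(t)A_6^+$ on the regular part $(\Sp(2)/\Sp(1)_+)\times\R_+$. Such a $Y$ has no $\partial_t$-component, so its flow preserves each level set $\{t=t_0\}$ and hence also the $dt^2$-part of $g_{BS}$.

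The key observation I would exploit is that on each orbit the induced metric is preserved not only by the left $\Sp(2)$-action but also by right multiplication by $\Sp(1)_-\subset\Sp(2)$. Indeed, $g_t$ corresponds to an $\Sp(1)_+$-invariant inner product on the reductive complement $\splie(2)/\splie(1)_+ = \splie(1)_-\oplus\h$, where $\h$ is identified with $\Span{X_1,X_2,X_3,X_4}$; up to the $t$-dependent scalings encoded by $f(t)$ and $g(t)$, this inner product is the orthogonal sum of a multiple of the Killing form on $\splie(1)_-$ and a multiple of the standard Euclidean norm on $\h$. Both summands are $\Sp(1)_-$-equivariant: the adjoint action of $\Sp(1)_-$ on $\splie(1)_-$ is the standard $\SO(3)$-action and preserves the Killing form, while the natural right-multiplication action on $\h$ is orthogonal.

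On a fixed orbit $\{t=t_0\}$ the restriction of $Y$ is the constant combination $a(t_0)A_4^++b(t_0)A_5^++c(t_0)A_6^+$, whose flow is precisely right multiplication by $\exp\bigl(s(a(t_0)A_4+b(t_0)A_5+c(t_0)A_6)\bigr)\in\Sp(1)_-$; this preserves $g_{t_0}$ by the previous paragraph. Combined with the preservation of $t$, the flow of $Y$ is an isometry of the full metric $g_{BS}$, so $Y$ is Killing. The one substantive step is to identify $g_t$ in the stated form from the data introduced in the previous section; once this is done, the $\Sp(1)_-$-invariance of each summand is essentially forced by representation theory, which I expect to be the main (but mild) obstacle.
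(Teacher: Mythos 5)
Your proposal is correct and follows essentially the same route as the paper: the paper writes the metric as $f\bigl(\tfrac{1}{4t}dt^2+t((A^4)^2+(A^5)^2+(A^6)^2)\bigr)+g\bigl((X^1)^2+\dots+(X^4)^2\bigr)$ and concludes from the $\Sp(1)_-$-invariance of the two orbit summands, which is exactly the representation-theoretic observation you spell out in more detail via the splitting $\splie(1)_-\oplus\h$ and the identification of the flow of $Y$ with right translation by $\Sp(1)_-$. Note only that you (like the paper's own proof) implicitly restrict to invariant fields tangent to the principal orbits, whereas the lemma as literally stated would also cover fields with a radial component.
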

\begin{proof}
The Bryant-Salamon metric takes the form
\begin{multline*}
f(\alpha_0^2+\dots + \alpha_3^2)+g(\omega_0^2+\dots + \omega_3^2)\\
=f(\frac1{4t}dt^2+t((A^4)^2+(A^5)^2+(A^6)^2))+g((X^1)^2+(X^2)^2+(X^3)^2+(X^4)^2)
.
\end{multline*}
Since $(A^4)^2+(A^5)^2+(A^6)^2$ and $(X^1)^2+(X^2)^2+(X^3)^2+(X^4)^2$ are $\Sp(1)_-$-invariant, the claim follows.
\end{proof}
Arguing as in \cite[Proposition 5.2]{ContiMadsenSalamon}, we obtain:
\begin{proposition}
The $\Sp(2)$-invariant linear perturbations of the Bryant-Salamon metric are obtained from the Bryant-Salamon metric via an $\Sp(2)$-equivariant diffeomorphism.
\end{proposition}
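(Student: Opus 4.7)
The argument adapts the Moser-type deformation used in \cite[Proposition 5.2]{ContiMadsenSalamon}. Set $Y=a(t)A_4^++b(t)A_5^++c(t)A_6^+$ and consider the one-parameter family of closed $\Spin(7)$-forms
\[\Phi_s=\Phi+s\,dt\wedge Y\hook\Phi,\qquad s\in[0,1],\]
interpolating between the Bryant-Salamon form $\Phi_0=\Phi$ and the perturbation $\Phi_1$ of interest. By the preceding theorem each $\Phi_s$ is $\Sp(2)$-invariant and induces an $\Sp(2)$-invariant metric $g_s$ with holonomy contained in $\Spin(7)$. The plan is to construct a smooth isotopy $\psi_s$ of $\Sp(2)$-equivariant diffeomorphisms of $S$, with $\psi_0=\id$ and $\psi_s^*\Phi_s=\Phi$; since the $\Spin(7)$-form determines the metric, this yields $\psi_1^*g_1=g_0$ and proves the proposition.

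I would realise $\psi_s$ as the flow of a time-dependent $\Sp(2)$-invariant vector field $Z_s$, so that equivariance of each $\psi_s$ is automatic. By the description of invariant vector fields preceding Lemma~\ref{lemma:invariantfield}, extended to include the invariant field $\partial_t$, $Z_s$ must take the form $\lambda(s,t)\,\partial_t+\mu_4(s,t)A_4^++\mu_5(s,t)A_5^++\mu_6(s,t)A_6^+$. Differentiating the identity $\psi_s^*\Phi_s=\Phi$ in $s$ reduces the construction of $\psi_s$ to the Moser equation
\[\Lie_{Z_s}\Phi_s=-\partial_s\Phi_s=-dt\wedge Y\hook\Phi,\qquad Z_0=0,\]
which is a system of first-order ODEs in $s$ for the four coefficient functions of $Z_s$.

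The essential structural input is the Killing lemma just proved, together with Lemma~\ref{lemma:invariantfield}: combined, they guarantee that $\Lie_Y\Phi$ lies in the $dt$-ideal and has the correct shape to match the right-hand side of the Moser equation. Using Proposition~\ref{prop:BSform1} to expand $\Phi$ in the invariant coframe, together with the identities $Y\hook dt=0$ and $Y\hook(Y\hook\Phi)=0$ coming from Remark~\ref{claim:2}, the equation collapses to a finite solvable system of ODEs in $s$ whose coefficients depend on $t$ only through $(a,b,c,f,g)$. The evenness of $a,b,c$ ensures that the solution $Z_s$ extends smoothly across the zero section, while invariance of its coefficients guarantees completeness of the flow for $s\in[0,1]$. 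The main obstacle I expect is the explicit matching step: one has to carry out the contraction of $Y$ against the $\psi_1,\psi_2,\psi_3$ components of $\Phi$ and verify that the resulting linear-algebraic system in $\lambda,\mu_4,\mu_5,\mu_6$ is consistent. This is precisely where the $\Sp(1)_-$-equivariance exploited in the preceding Killing lemma and in Lemma~\ref{lemma:invariantfield} pays off, by reducing all relevant identities to invariants of the $\Sp(1)_-$-action on $\Lambda^\bullet\splie(2)^*$.
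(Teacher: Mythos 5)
Your overall strategy---produce an $\Sp(2)$-equivariant diffeomorphism pulling the perturbed form back to $\Phi$ and conclude because the $\Spin(7)$-form determines the metric---is the right one, and you correctly identify Lemma~\ref{lemma:invariantfield} and the Killing lemma as the structural inputs. The problem is that your Moser set-up defers exactly the step that carries the content of the proposition: you must show that $\Lie_{Z_s}\Phi_s=-dt\wedge Y\hook\Phi$ is solvable for $Z_s$ inside the finite-dimensional space of invariant vector fields, and you explicitly leave this as ``the main obstacle''. A priori there is no reason the operator $Z\mapsto\Lie_Z\Phi_s$ hits the prescribed right-hand side, so asserting that the system ``collapses to a finite solvable system'' is not a proof. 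Moreover your description of that system is off: writing $\Phi=dt\wedge\sigma_t+\beta_t$ with $\sigma_t,\beta_t$ free of $dt$, and taking $Z_s$ tangent to the principal orbits (a $\lambda\,\partial_t$-component must in fact be excluded, since $\Lie_{\lambda\partial_t}$ would move the orbital part $\beta_t$, which is the same for every $s$), Lemma~\ref{lemma:invariantfield} gives $\Lie_{Z_s}\Phi_s=dt\wedge\bigl((\partial_tZ_s+s[Z_s,Y])\hook\beta_t\bigr)$; by nondegeneracy of $\beta_t$ on orbit directions the Moser equation becomes, at each fixed $s$, the linear ODE $\partial_tZ_s+s[Z_s,Y]=-Y$ in the cohomogeneity-one variable $t$---not ``a system of first-order ODEs in $s$''. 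One must then still integrate $Z_s$ in $s$ and check smooth extension over the zero section.

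The argument the paper invokes (Proposition 5.2 of Conti--Madsen--Salamon) is more direct and bypasses Moser entirely: set $F(x,t)=(\phi_t(x),t)$, where $\phi_t$ is the flow along the orbits of a time-dependent invariant field valued in $\Span{A_4^+,A_5^+,A_6^+}$. Since, by Lemma~\ref{lemma:invariantfield} and the $\Sp(1)_-$-invariance of $A^{456}$ and of $A^4\wedge\Omega_1+A^5\wedge\Omega_2+A^6\wedge\Omega_3$, the flow preserves both $\sigma_t$ and $\beta_t$, one computes $F^*\Phi=\Phi+dt\wedge\tilde Y\hook\Phi$ for an invariant $\tilde Y$ of the same type, and a linear ODE on $\Sp(1)_-$ adjusts the generator so that $\tilde Y$ has the prescribed coefficients $(a,b,c)$. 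Here the only ODE to solve is the definition of a flow, which always exists, and equivariance of $F$ is automatic. Either complete the solvability analysis of your Moser equation along the lines above, or replace it with this direct shear construction.
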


\small\noindent D.~Conti: Dipartimento di Matematica e Applicazioni, Universit\`a di Milano Bicocca, via Cozzi 55, 20125 Milano, Italy.\\
\texttt{diego.conti@unimib.it}\\
D.~Perolini: Dipartimento di Matematica F.~Casorati, Università di Pavia, via Ferrata 5, 27100 Pavia, Italy.\\
\texttt{daniel.perolini2@gmail.com}\\

\bibliographystyle{plain}

\bibliography{linear}

\end{document}